\newcommand{\bbz}{{\mathbb Z}}
\newcommand{\lan}{\langle}
\newcommand{\ran}{\rangle}
\newtheorem{thm}{Theorem}[section]
\newtheorem{lem}[thm]{Lemma}
\newtheorem{prop}[thm]{Proposition}
\newtheorem*{ack*}{Acknowledgment}
\newtheorem*{note*}{Notation and conventions}
\DeclareFontFamily{OT2}{cmr}{\hyphenchar\font45 }
\DeclareFontShape{OT2}{cmr}{m}{n}{%
   <5><6><7><8><9>gen*wncyr%
   <10><10.95><12><14.4><17.28><20.74><24.88>wncyr10}{}
\DeclareFontShape{OT2}{cmr}{b}{n}{%
   <5><6><7><8><9>gen*wncyb%
   <10><10.95><12><14.4><17.28><20.74><24.88>wncyb10}{}
\DeclareMathAlphabet{\mathcyr}{OT2}{cmr}{m}{n}
\DeclareMathAlphabet{\mathcyb}{OT2}{cmr}{b}{n}
\SetMathAlphabet{\mathcyr}{bold}{OT2}{cmr}{b}{n}
\begin{document}

\title{Automorphism group of the holomorph of a cyclic group}
\author{Kazuki Sato}
\begin{abstract}
We show that the holomorph of a cyclic group of order $n$ is isomorphic to its own automorphism group when $n$ is twice of a power of an odd prime.
\end{abstract}
\date{\today}
\address{National Institute of Technology, Ichinoseki College, Japan}
\email{kazuki-s@ichinoseki.ac.jp}
\subjclass[2010]{20D45}
\maketitle

\section{Introduction}
Let $C_n$ be a cyclic group of order $n$.
In this paper we prove the following.

\begin{thm}[{Theorem \ref{main}}]\label{intromain}
Let $p$ be an odd prime and $e>0$ a positive integer.
Assume $n=2p^e$.  
Let $G=\operatorname{Hol}(C_n)$ be the holomorph of $C_n$.
Then there exists an isomorphism $G \cong \operatorname{Aut}(G)$. 
\end{thm}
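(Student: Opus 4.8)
The plan is to exploit the fact that, because $n=2p^e$ with $p$ odd, the group $\operatorname{Aut}(C_n)\cong(\Z/n\Z)^\times$ is \emph{cyclic}. First I would record the structural reduction $G\cong C_2\times\operatorname{Hol}(C_{p^e})$: since $C_n\cong C_2\times C_{p^e}$ and $\operatorname{Aut}(C_2)$ is trivial, the automorphism group acts only on the $C_{p^e}$ factor, so the $C_2$ splits off as a central direct factor. I would then fix notation $G=\langle z\rangle\times K$, where $z$ is central of order $2$ and $K=\operatorname{Hol}(C_{p^e})=\langle b\rangle\rtimes\langle h\rangle$, with $b$ generating $C_{p^e}$, $h$ generating the cyclic group $\operatorname{Aut}(C_{p^e})$ of order $m=p^{e-1}(p-1)$, and $hbh^{-1}=b^{\rho}$ for a fixed primitive root $\rho$ modulo $p^e$. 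Every element of $G$ is then uniquely $z^\epsilon b^t h^s$.

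Next I would pin down two characteristic subgroups. Because $p$ is odd, inversion $b\mapsto b^{-1}$ is a nontrivial automorphism fixing no nonidentity element of $C_{p^e}$; this forces $Z(K)=1$, hence $Z(G)=\langle z\rangle\cong C_2$. Thus $z$ is the unique nontrivial central element, so every $\phi\in\operatorname{Aut}(G)$ satisfies $\phi(z)=z$. For the second subgroup, taking $\tau\in\langle h\rangle$ to be the map $b\mapsto b^2$ (a unit since $p$ is odd) gives $[\tau,b]=\tau b\tau^{-1}b^{-1}=b$, so $b\in G'$; as $G/\langle b\rangle$ is abelian this yields $G'=\langle b\rangle\cong C_{p^e}$, which is therefore characteristic. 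Consequently $\phi(b)=b^{j}$ for some unit $j$ modulo $p^e$.

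The key step is to determine $\phi(h)$. Writing $\phi(h)=z^\epsilon b^t h^s$ and applying $\phi$ to the relation $hbh^{-1}=b^{\rho}$, a short computation gives $\phi(h)\phi(b)\phi(h)^{-1}=b^{j\rho^s}$, which must equal $\phi(b)^{\rho}=b^{j\rho}$; since $\rho$ is a primitive root this forces $\rho^{s-1}\equiv1$, i.e.\ $s\equiv1\pmod m$. Hence the $h$-component of $\phi(h)$ is forced to be $h$, and $\phi$ is determined by the free data $(j,\epsilon,t)\in(\Z/p^e\Z)^\times\times\Z/2\Z\times\Z/p^e\Z$. Conversely, each such triple yields a genuine automorphism: the defining relations are checked directly, the only mildly delicate point being that $\phi(h)=z^\epsilon b^t h$ has order exactly $m$, which follows from $\sum_{u\in(\Z/p^e\Z)^\times}u\equiv0\pmod{p^e}$ (the units pair off as $u,-u$). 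This already gives $|\operatorname{Aut}(G)|=m\cdot2\cdot p^e=|G|$.

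Finally I would compute the group law by composition: $\phi_{j_1,\epsilon_1,t_1}\circ\phi_{j_2,\epsilon_2,t_2}$ corresponds to $(j_1j_2,\;\epsilon_1+\epsilon_2,\;t_1+j_1t_2)$. The coordinate $\epsilon$ is an independent additive $\Z/2\Z$, while $(j,t)$ multiply exactly as in the affine group $\Z/p^e\Z\rtimes(\Z/p^e\Z)^\times=\operatorname{Hol}(C_{p^e})$. Therefore $\operatorname{Aut}(G)\cong C_2\times\operatorname{Hol}(C_{p^e})\cong G$, as claimed. The main obstacle is the bookkeeping in the third step—reducing the image of $h$ via the primitive-root relation and verifying that every admissible triple is an automorphism—since this is precisely where the hypothesis $n=2p^e$ (equivalently, $\operatorname{Aut}(C_n)$ cyclic with a single generating relation) is essential; for general $n$ the relation $hbh^{-1}=b^\rho$ would be replaced by several independent conjugation relations and the $h$-component would no longer be rigidly determined.
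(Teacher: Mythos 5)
Your argument is correct, and it reaches the theorem by a route that differs in substance from the paper's. You first split off the $2$-part, writing $G\cong C_2\times\operatorname{Hol}(C_{p^e})$, and then pin down $\phi(b)$ by observing that $G'=\langle b\rangle$ is characteristic; the paper never performs this decomposition but works directly with generators $x$ of $C_n$ and $y$ of $\operatorname{Aut}(C_n)$, and its analogue of your step (Lemma \ref{x}, showing $\alpha(x)\in\langle x\rangle$) is obtained instead by a comparison of element orders that leans on the gcd computation $(n,k-1)=2$ of Lemma \ref{elnum}. Your derived-subgroup argument is arguably cleaner at that point, since it makes the rigidity of $\phi(b)$ a consequence of a characteristic subgroup rather than of arithmetic. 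The determination of $\phi(h)$ via the conjugation relation is essentially the paper's Lemma \ref{y}, and your verification that every triple $(j,\epsilon,t)$ yields an automorphism (using $\sum_{u\in(\mathbb{Z}/p^e\mathbb{Z})^\times}u\equiv 0$ and the evenness of $m$) plays the role of the paper's Proposition \ref{surj}, where the same sum appears as Lemma \ref{elnum}(iii) and the extension is delegated to Isaacs' Theorem \ref{Is335} rather than checked against a presentation. The endgame also differs: the paper constructs an explicit isomorphism $\psi:\operatorname{Aut}(G)\to G$, $\alpha\mapsto x^c y^j$, and proves it is a bijective homomorphism, whereas you compute the composition law on triples and recognize both sides abstractly as $C_2\times\operatorname{Hol}(C_{p^e})$; under the Chinese remainder identification $x^c\leftrightarrow z^\epsilon b^t$ and $y^j\leftrightarrow k^j$ the two parametrizations and group laws match, so the proofs are consistent. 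What your version buys is conceptual transparency and an explanation of \emph{why} the hypothesis $n=2p^e$ matters (a single conjugation relation because $\operatorname{Aut}(C_n)$ is cyclic); what the paper's version buys is a concrete, canonical isomorphism $\operatorname{Aut}(G)\cong G$ stated without passing through the direct-product normal form.
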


It is known that the automorphism group of the dihedral group $D_{2n}$ of order $2n$ is isomorphic to the holomorph $\operatorname{Hol}(C_n)$ of $C_n$ \cite{walls}.  
It is also known that the holomorph $\operatorname{Hol}(C_n)$ of $C_n$ is a complete group when $n$ is odd \cite{miller}.
In particular, $\operatorname{Hol}(C_n)$ is isomorphic to $\operatorname{Aut}(\operatorname{Hol}(C_n))$ when $n$ is odd.
But when $n$ is even, the center $Z(\operatorname{Hol}(C_n))$ is nontrivial (see Lemma \ref{center}), so $\operatorname{Hol}(C_n)$ is not complete.
Despite that fact, Theorem \ref{intromain} asserts that two groups $\operatorname{Hol}(C_n)$ and 
$\operatorname{Aut}(\operatorname{Hol}(C_n))$ are isomorphic as abstract groups when $n=2p^e$.
We note that the case $e=1$, that is, $n=2p$ in Theorem \ref{intromain}
is known to be true \cite[Theorem 2.10]{svb}.

In section $2$, we recall preliminary results from elementary group theory and number theory.
We show our main theorem in section $3$.

\section{Preliminaries}

\subsection{Semi-direct product}
We will review briefly the notion of a semi-direct product.
Given two groups $A$ and $B$, and a homomorphism $\theta : B \to \operatorname{Aut}(A)$,
the semi-direct product gives a group structure to the set $G = A \times B$ by
\[ (a_1, b_1) \cdot (a_2 , b_2) =(a_1 \theta(b_1)(a_2), b_1 b_2).\]
We will denote the semi-direct product of $A$ by $B$ with $\theta : B \to \operatorname{Aut}(A)$ by 
$A \rtimes_\theta B$,
where $\theta$ may be omitted if it is clear from the context.
We identify an element $a \in A$ with $(a,1)$ and $b \in B$ with $(1,b)$.
With these identifications we can see that all the elements of $A \rtimes_\theta B$ can be written uniquely in the form $ab$ with $a \in A$ and $b \in B$.
Multiplication of elements from $A$ and $B$ is determined by the relation $bab^{-1}=\theta(b)(a)$.
From these identifications we can write $A \rtimes_\theta B=AB$ with $A \triangleleft A \rtimes_\theta B$ and $A \cap B =1$.
In particular $(A \rtimes_\theta B)/A \cong B$.
If $B= \operatorname{Aut}(A)$ and $\theta$ is the identity map, then we call $A\rtimes \operatorname{Aut}(A)$ the holomorph of $A$ and write it as $\operatorname{Hol}(A)$.

We use the following theorem in section 3.
\begin{thm}[{\cite[Theorem 3.35]{Isaacs}}]\label{Is335}
Let $G$ and $G_0$ be groups, 
and let $N \triangleleft G$ and $N_0 \triangleleft G_0$,
where $G/N$ and $G_0/N_0$ are cyclic of finite order $m$.
Suppose that $y : N \to N_0$ is an isomorphism,
and let $gN$ and $g_0N_0$ be generating cosets of $G/N$ and $G_0 / N_0$ respectively.
Assume that
\[ y(g^m) = (g_0)^m \ \ \ \textrm{and} \ \ \ y(gxg^{-1}) = g_0y(x)g_0^{-1}\]
for all elements $x \in N$.
Then there is a unique isomorphism $\alpha : G \to G_0$ that extends the given isomorphism $y:N \to N_0$ and that satisfies $\alpha(g)=g_0$.
\end{thm}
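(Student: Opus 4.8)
The plan is to use the cyclic structure of the quotient to put every element into a canonical normal form, to define $\alpha$ on these normal forms, and then to verify the homomorphism property by a direct computation that exploits both hypotheses at once.

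First I would record that, since $G/N$ is cyclic of order $m$ generated by $gN$ and $N \triangleleft G$, every element of $G$ can be written uniquely as $x g^i$ with $x \in N$ and $0 \le i < m$; in particular $g^m \in N$. The same normal form holds in $G_0$ relative to $N_0$ and $g_0$. This lets me \emph{define} $\alpha(x g^i) = y(x) g_0^i$, a map that is well defined precisely because of the uniqueness of the normal form. By construction $\alpha$ restricts to $y$ on $N$ (take $i=0$) and sends $g$ to $g_0$ (take $x=1$, $i=1$), so the two prescribed properties of $\alpha$ hold automatically; the real content is that this $\alpha$ is a homomorphism.

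Next I would promote the conjugation hypothesis from $g$ to $g^i$: by induction on $i \ge 0$, setting $x' = g^i x g^{-i} \in N$ and applying the base case to $x'$, one obtains $y(g^i x g^{-i}) = g_0^i\, y(x)\, g_0^{-i}$ for all $i$ and all $x \in N$. The heart of the argument is then the multiplicativity check. For $a = x g^i$ and $b = z g^j$ I would rewrite $ab = \bigl(x \cdot g^i z g^{-i}\bigr) g^{i+j}$, split $g^{i+j} = (g^m)^q g^r$ where $i+j = qm + r$ with $0 \le r < m$, and thereby read off the normal form of $ab$ (using $g^m \in N$). Applying $\alpha$ and then invoking $y(g^i z g^{-i}) = g_0^i\, y(z)\, g_0^{-i}$ together with $y(g^m) = g_0^m$, the carry factor $(g^m)^q$ and the reduction of the exponent modulo $m$ cancel against one another, so that both $\alpha(ab)$ and $\alpha(a)\alpha(b)$ collapse to the same expression $y(x)\, g_0^i\, y(z)\, g_0^j$.

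Finally, bijectivity follows with no finiteness assumption on $N$: injectivity comes from uniqueness of the normal form in $G_0$ combined with injectivity of $y$, and surjectivity from lifting an arbitrary $x_0 g_0^i \in G_0$ through the surjection $y$. Uniqueness is then immediate, since any isomorphism $\beta$ extending $y$ with $\beta(g)=g_0$ must satisfy $\beta(x g^i) = \beta(x)\beta(g)^i = y(x) g_0^i = \alpha(x g^i)$ on every element, forcing $\beta = \alpha$. I expect the only delicate point to be the exponent bookkeeping in the multiplicativity step — ensuring that the carry produced when $i+j \ge m$ is absorbed exactly by the identity $y(g^m)=g_0^m$ — which is precisely the place where the two hypotheses must be used together.
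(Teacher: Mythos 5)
The paper does not prove this statement at all --- it is quoted verbatim from Isaacs (Theorem 3.35 of \emph{Finite Group Theory}) and used as a black box in the proof of Proposition \ref{surj} --- so there is no in-paper argument to compare against. Your proof is correct and is essentially the standard argument: the unique normal form $xg^i$ ($x\in N$, $0\le i<m$) makes the definition $\alpha(xg^i)=y(x)g_0^i$ well defined, the inductive upgrade of the conjugation hypothesis to $y(g^izg^{-i})=g_0^i\,y(z)\,g_0^{-i}$ is right, and the multiplicativity computation works: writing $ab=\bigl(x\,(g^izg^{-i})\,(g^m)^q\bigr)g^r$ with $i+j=qm+r$, the carry $(g^m)^q$ is sent to $(g_0^m)^q$ by the hypothesis $y(g^m)=g_0^m$, and $g_0^{-i}(g_0^m)^qg_0^r=g_0^{j}$ collapses everything to $y(x)g_0^iy(z)g_0^j=\alpha(a)\alpha(b)$. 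Bijectivity and uniqueness are handled correctly. The only cosmetic slip is in the induction step, where you should conjugate $x'=g^{i-1}xg^{-(i-1)}$ by $g$ (not set $x'=g^ixg^{-i}$) before applying the base case; this does not affect the validity of the argument.
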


\subsection{Dihedral Groups}

Let $n$ be a positive integer.
In this paper we let $C_n$ denote the cyclic group of order $n$ and $D_{2n}$ the dihedral group of order $2n$.
As mentioned in the Introduction, it is known that the automorphism group $\operatorname{Aut}(D_{2n})$ of $D_{2n}$ is isomorphic to the holomorph $\operatorname{Hol}(C_n)$ of $C_n$.

\begin{lem}\label{center}
Let $n$ be an even integer.
Let $G=\operatorname{Hol}(C_n)$ be the holomorph of $C_n$.
Then the center $Z(G)$ of $G$ is equal to $\{1, z\}$, where $z \in C_n$ is the unique involution.
\end{lem}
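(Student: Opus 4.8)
The plan is to compute the center directly from the semidirect-product structure $G=\operatorname{Hol}(C_n)=C_n\rtimes\operatorname{Aut}(C_n)$. I would write $C_n=\langle a\rangle$ with $a^n=1$ and identify $\operatorname{Aut}(C_n)$ with the group of units modulo $n$ by letting $\sigma_k$ denote the automorphism $a\mapsto a^k$, for each $k$ coprime to $n$. Every element of $G$ is then uniquely of the form $(a^i,\sigma_k)$, and the multiplication rule from the definition of the semidirect product becomes $(a^i,\sigma_k)(a^j,\sigma_l)=(a^{i+kj},\sigma_{kl})$. Everything in the proof is then a short computation with this rule.

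First I would show $Z(G)\subseteq C_n$. Suppose $(a^i,\sigma_k)$ is central; imposing that it commute with $a=(a,\sigma_1)$ yields $(a^{i+k},\sigma_k)=(a^{1+i},\sigma_k)$, hence $k\equiv 1\pmod n$, i.e.\ $\sigma_k=\operatorname{id}$. Thus any central element already lies in $C_n$.

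Next, for $a^i\in C_n$ I would determine when it commutes with each $\sigma_l$. Comparing $(a^i,\sigma_1)(1,\sigma_l)=(a^i,\sigma_l)$ with $(1,\sigma_l)(a^i,\sigma_1)=(a^{li},\sigma_l)$ shows that $a^i$ is central if and only if $(l-1)i\equiv 0\pmod n$ for every unit $l$. The key observation is that $-1$ is always a unit, so specializing to $l=-1$ forces $2i\equiv 0\pmod n$; since $n$ is even this gives $i\equiv 0$ or $i\equiv n/2\pmod n$, that is, $a^i\in\{1,a^{n/2}\}$.

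Finally I would verify the reverse inclusion: the identity is central, and $a^{n/2}$ (which is exactly the unique involution $z$ of $C_n$ when $n$ is even) is central because every unit $l$ is odd when $n$ is even, so $l-1$ is even and $(l-1)(n/2)\equiv 0\pmod n$. This yields $Z(G)=\{1,a^{n/2}\}=\{1,z\}$. I do not expect a serious obstacle, as the argument is essentially the $l=-1$ specialization plus routine checking; the only point needing separate attention is the degenerate case $n=2$, where $\operatorname{Aut}(C_2)$ is trivial and $G=C_2$ is abelian, so the claim holds directly because $C_2=\{1,z\}$.
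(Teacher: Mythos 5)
Your proposal is correct and follows essentially the same route as the paper: kill the automorphism component by commuting with the generator of $C_n$, then use the inversion automorphism (your $l=-1$) to force the remaining element to be an involution, and finally check that the unique involution is indeed central. The only cosmetic difference is that you work in explicit semidirect-product coordinates and verify centrality of $z$ via the parity of units, whereas the paper argues abstractly that every automorphism must fix the unique involution.
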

   
\begin{proof}   
Since $y(z) \in C_n$ is also an involution for any $y \in \operatorname{Aut}(C_n)$, we have $yzy^{-1}=y(z)=z$.
Hence $\{ 1, z \} \subset Z(G)$.
We have to show the reverse inclusion.
   
Let $xy \in Z(G)$, where $x \in C_n, y \in \operatorname{Aut}(C_n)$.
Then we have $(xy)x'=x'(xy)$ for any $x' \in C_n$.
Note that $yx'=y(x')y$, so $xy(x')y=x(yx')=(xy)x'=x'(xy)=xx'y$ for any $x' \in C_n$.
This implies that $y(x')=x'$ for any $x' \in C_n$, hence $y=1 \in \operatorname{Aut}(C_n)$.
Let $\iota \in \operatorname{Aut}(C_n)$ be the automorphism given by $\iota(x)=x^{-1}$.
Since $x \in Z(G)$, we have $x=\iota x \iota^{-1} =\iota(x)=x^{-1}$.
Therefore $x^2=1$, that is, $x=1$ or $x=z$ is the involution.
This shows that $Z(G) \subset \{1, z\}$.
\end{proof}

If $G$ is an arbitrary group, then $G/Z(G)$ is naturally isomorphic to the group of inner automorphisms $\operatorname{Inn}(G)$.
If $Z(G)=1$, therefore, $G \cong \operatorname{Inn}(G)$, and so we can identify $G$ with $\operatorname{Inn}(G)$ via the natural isomorphism, and this embeds $G$ as a subgrouop of the automorphism group $\operatorname{Aut}(G)$.
We say that $G$ is complete if $Z(G)=1$ and $\operatorname{Aut}(G)=\operatorname{Inn}(G)$.
It is known that $\operatorname{Aut}(D_{2n})\cong \operatorname{Hol}(C_n)$ is complete when $n$ is odd.
But when $n$ is even, $\operatorname{Aut}(D_{2n})$ can never be complete.

\subsection{Elementary number theory}
\begin{lem}\label{powermodp}
Let $k$ be an integer and $p$ a prime.
Assume that $k \equiv 1 \mod p$.
Then $k^{p^{e-1}} \equiv 1 \mod p^e$ for any $e \geq 1$.
\end{lem}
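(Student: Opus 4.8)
The plan is to argue by induction on $e$. The base case $e=1$ is immediate: there $p^{e-1}=p^{0}=1$, so the assertion $k^{p^{e-1}}\equiv 1\pmod{p^{e}}$ is literally the hypothesis $k\equiv 1\pmod p$, and nothing needs to be done.

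For the inductive step I would assume $k^{p^{e-1}}\equiv 1\pmod{p^{e}}$ and aim to deduce $k^{p^{e}}\equiv 1\pmod{p^{e+1}}$. The inductive hypothesis lets me write $k^{p^{e-1}}=1+ap^{e}$ for some integer $a$. Raising both sides to the $p$-th power and expanding by the binomial theorem gives $k^{p^{e}}=(1+ap^{e})^{p}=1+\binom{p}{1}ap^{e}+\sum_{j=2}^{p}\binom{p}{j}a^{j}p^{ej}$, so the goal reduces to showing that every term past the leading $1$ is divisible by $p^{e+1}$.

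The only real content is the exponent bookkeeping. The $j=1$ term equals $ap^{e+1}$, since $\binom{p}{1}=p$ supplies exactly one extra factor of $p$ beyond the $p^{e}$ already present; this is plainly a multiple of $p^{e+1}$. For $2\le j\le p$ the factor $p^{ej}$ already carries exponent $ej\ge 2e=e+e\ge e+1$, using $e\ge 1$; hence $p^{e+1}\mid p^{ej}$ and each such term is again a multiple of $p^{e+1}$. Combining these, $k^{p^{e}}\equiv 1\pmod{p^{e+1}}$, which closes the induction.

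I do not anticipate a genuine obstacle: this is a standard ``lifting'' lemma and the computation is routine. The one point meriting attention is the estimate $ej\ge e+1$ for $j\ge 2$, which holds for \emph{every} prime, including $p=2$; thus no parity assumption on $p$ is required here, even though $p$ is taken to be odd elsewhere in the paper.
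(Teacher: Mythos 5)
Your proof is correct and follows essentially the same route as the paper: induction on $e$, writing $k^{p^{e-1}}=1+ap^{e}$, expanding $(1+ap^{e})^{p}$ by the binomial theorem, and checking that every term beyond the leading $1$ is divisible by $p^{e+1}$. The only cosmetic difference is in how the middle terms are grouped (the paper uses $p\mid\binom{p}{i}$ for $1\le i\le p-1$, while you isolate $j=1$ and use $ej\ge e+1$ for $j\ge 2$); both verifications are valid.
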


\begin{proof}
By induction on $e$.
Assume that $k^{p^{e-1}} \equiv 1 \mod p^e$ for some $e \geq 1$.
Then $k^{p^{e-1}} = 1+ rp^e$ for some integer $r$.
Hence we have
\[ k^{p^e}=(1+rp^e)^p = 1+ \sum_{i=1}^{p-1} \binom{p}{i} r^ip^{ei} + r^p p^{ep}.\]
For each $1 \leq i \leq p-1$, since $\binom{p}{i}$ is divisible by $p$, we see that $\binom{p}{i}p^{ei} \equiv 0 \mod p^{e+1}$.
Clearly $p^{ep} \equiv 0 \mod p^{e+1}$.
This shows that $k^{p^e} \equiv 1 \mod p^{e+1}$.
\end{proof}

\begin{lem}\label{elnum}
Let $p$ be an odd prime and $e>0$.
Let $k$ be an integer coprime to $n=2p^e$.
Assume that the order of $k \in (\bbz/n\bbz)^*$ is $\phi(n)=p^{e-1}(p-1)$, where $\phi$ is Euler's totient funciton.
Then
\begin{enumerate}
\item[(i)] $k-1$ is not divisible by $p$, 
\item[(ii)] the greatest common divisor of $n$ and $k-1$ is $(n,k-1)=2$,
\item[(iii)] $1+k+k^2+\dots + k^{\phi(n)-1} \equiv 0 \mod n$.
\end{enumerate}
\end{lem}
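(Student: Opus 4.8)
The plan is to handle the three parts in order, leaning throughout on the Chinese Remainder decomposition $(\bbz/n\bbz)^* \cong (\bbz/2\bbz)^* \times (\bbz/p^e\bbz)^*$. Since $(\bbz/2\bbz)^*$ is trivial, the hypothesis that $k$ has order $\phi(n)=p^{e-1}(p-1)$ in $(\bbz/n\bbz)^*$ says exactly that $k$ is a primitive root modulo $p^e$. I would record at the outset the two facts used repeatedly: because $k$ is coprime to the even number $n$, it is odd, so $2 \mid k-1$; and because $p$ is odd, $p-1$ is even, so $\phi(n)$ is even.

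For (i) I would argue by contradiction. Suppose $p \mid k-1$, i.e. $k \equiv 1 \bmod p$. Then Lemma \ref{powermodp} gives $k^{p^{e-1}} \equiv 1 \bmod p^e$, while $k$ odd gives $k^{p^{e-1}} \equiv 1 \bmod 2$, so by CRT $k^{p^{e-1}} \equiv 1 \bmod n$. Since $p-1 \geq 2$ we have $p^{e-1} < p^{e-1}(p-1) = \phi(n)$, contradicting that the order of $k$ is $\phi(n)$. Hence $p \nmid k-1$. Part (ii) is then immediate: from $2 \mid k-1$, $p \nmid k-1$, and the fact that the only primes dividing $n = 2p^e$ are $2$ (to the first power) and $p$, the greatest common divisor $(n,k-1)$ can only be $2$.

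The substantive part is (iii). Writing $S = 1+k+k^2+\dots+k^{\phi(n)-1}$, the identity $(k-1)S = k^{\phi(n)}-1$ combined with Euler's theorem gives $(k-1)S \equiv 0 \bmod n$. One cannot simply cancel $k-1$, because part (ii) shows that $k-1$ and $n$ share the factor $2$; so I would split the congruence via CRT. Modulo $p^e$, part (i) makes $k-1$ a unit, whence $S \equiv 0 \bmod p^e$. Modulo $2$, every $k^i \equiv 1$ since $k$ is odd, so $S \equiv \phi(n) \bmod 2$, which vanishes because $\phi(n)$ is even. As $\gcd(2,p^e)=1$, CRT then yields $S \equiv 0 \bmod n$.

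The only real obstacle is this cancellation step in (iii): dividing $(k-1)S \equiv 0$ through by $k-1$ fails precisely because of the common factor $2$ isolated in (ii). Separating the congruence into its $p^e$-part (where invertibility of $k-1$ comes from (i)) and its $2$-part (where the parity of $\phi(n)$ does the work) is what makes the argument succeed, and it is exactly why parts (i) and (ii) are established first.
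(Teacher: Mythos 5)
Your proofs of (i) and (ii) coincide with the paper's: the same contradiction via Lemma \ref{powermodp} for (i), and the same observation that $2 \mid k-1$ while $p \nmid k-1$ for (ii). The genuine difference is in (iii), where the paper gives no argument at all --- it simply cites \cite[Theorem 2.5]{svb} --- whereas you supply a complete, self-contained proof: from $(k-1)S = k^{\phi(n)}-1 \equiv 0 \bmod n$ you cannot cancel $k-1$ directly because of the shared factor $2$, so you split via the Chinese Remainder Theorem, using the invertibility of $k-1$ modulo $p^e$ (from (i)) on one side and the evenness of $\phi(n)$ on the other. This argument is correct, and it buys something the paper's version does not: it makes the lemma independent of the external reference and makes explicit why parts (i) and (ii) are the right preliminary facts --- they are exactly what is needed to justify the two halves of the cancellation. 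The only cost is a few extra lines.
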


\begin{proof}
(i)
Assume $k \equiv 1 \mod p$.
Then $k^{p^{e-1}} \equiv 1 \mod p^e$ from Lemma \ref{powermodp}.
Since $k$ is odd, we also have $k^{p^{e-1}} \equiv 1 \mod 2$.
Hence $k^{p^{e-1}} \equiv 1 \mod n$, so
the order of $k \in (\bbz/n\bbz)^*$ divides $p^{e-1}<p^{e-1}(p-1)=\phi(n)$, contradiction.

(ii)
Since $n$ and $k-1$ are even, $2|(n,k-1)$.
On the other hand, $(n,k-1)$ is not divisible by $p$ by (i).
This shows (ii).

(iii)
See \cite[Theorem 2.5]{svb}.
\end{proof}

\section{Main theorem}
In this section, we fix the notation as follows.
Let $e>0$ be an integer and $p$ an odd prime.
Put $n:= 2p^e$.
Then the automorphism group $\operatorname{Aut}(C_n)$ is a cyclic group of order $\phi(n)=p^{e-1}(p-1)$.
We fix generators $x \in C_n$ and $y \in \operatorname{Aut}(C_n)$.
Assume that $y(x)=x^k$, where $k$ is an integer coprime to $n$ with $1<k<n$.
In this situation, there exists an isomorphism $\operatorname{Aut}(C_n) \cong (\bbz / n\bbz)^*$ that maps $y$ to $k$.
We note that the order of $k \in (\bbz/n\bbz)^*$ is $\phi(n)$. 
Let $G:= \operatorname{Hol}(C_n) = C_n \rtimes \operatorname{Aut}(C_n)$ be the holomorph of $C_n$.
Then $G$ is generated by $x$ and $y$, and the elements of $G$ can be written in the form $x^a y^b$.
We see that $x^a y^b=x^c y^d$ if and only if $a \equiv c \mod n$ and $b \equiv d \mod \phi(n)$.
We have the following relations in $G$;
\[ x^n=1, \ y^{\phi(n)}=1, \ yxy^{-1}=y(x)=x^k.\]

\begin{lem}\label{lemma31}
$y^b x^a y^{-b}=x^{ak^b}$ holds for any integer $a$ and nonnegative integer $b$.
\end{lem}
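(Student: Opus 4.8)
The plan is to prove the identity by induction on the nonnegative integer $b$, taking the defining relation $yxy^{-1}=x^k$ as the engine of the argument. The only structural fact I need about $G$ is that $x$ and $y$ satisfy $x^n=1$, $y^{\phi(n)}=1$, and $yxy^{-1}=x^k$, all of which are recorded just above the statement.

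First I would isolate the single-step version of the claim, namely that conjugation by $y$ raises the exponent of $x$ by a factor of $k$: for every integer $m$ one has
\[ y\,x^m\,y^{-1} = \left(y x y^{-1}\right)^m = \left(x^k\right)^m = x^{mk}. \]
The middle equality is just the elementary observation that conjugation by a fixed element is a homomorphism, so it commutes with taking powers; the last equality is the defining relation. This handles both positive and negative (and zero) exponents $m$ uniformly, which is what lets the final identity hold for all integers $a$.

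With that in hand, the base case $b=0$ is immediate, since $y^0 x^a y^0 = x^a = x^{a k^0}$. For the inductive step I would assume $y^b x^a y^{-b} = x^{a k^b}$ holds for a given $b$ and all integers $a$, and then compute
\[ y^{b+1} x^a y^{-(b+1)} = y\left(y^b x^a y^{-b}\right)y^{-1} = y\,x^{a k^b}\,y^{-1} = x^{a k^b \cdot k} = x^{a k^{b+1}}, \]
where the second equality uses the inductive hypothesis and the third applies the single-step version with $m = a k^b$. This closes the induction.

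I do not anticipate a genuine obstacle here: the lemma is a routine consequence of the relation $yxy^{-1}=x^k$, and the whole argument is a two-line induction once the single-step exponent rule is stated. The only point worth a moment's care is making sure the intermediate identity $y x^m y^{-1} = x^{mk}$ is justified for all integers $m$ rather than only for positive $m$, so that the conclusion is valid for every integer $a$ as claimed; this is automatic since conjugation respects inverses as well as products.
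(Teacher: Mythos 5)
Your proof is correct and follows essentially the same route as the paper: an induction on $b$ whose engine is the single-step rule $y x^m y^{-1}=(yxy^{-1})^m=x^{mk}$. The only cosmetic difference is that you apply the inductive hypothesis before peeling off the last conjugation by $y$, whereas the paper does it in the opposite order; your explicit justification of the single-step rule for all integers $m$ is a point the paper leaves implicit.
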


\begin{proof}
By induction on $b$.
The case $b=0$ is trivial.
Assume that $b>0$ and that $y^{b-1}x^a y^{-(b-1)}=x^{ak^{b-1}}$ for any $a$.
Then we have
$y^bx^a y^{-b}=y^{b-1}yx^{a}y^{-1}y^{-(b-1)}=
y^{b-1}x^{ak}y^{-(b-1)}
=x^{akk^{b-1}}
=x^{ak^b}$.
Here, for the second equality, we used the relation in $G$ and for the third equality, we used the inductive hypothesis. 
\end{proof}

\begin{lem}\label{lemma32}
$(x^ay^b)^m=x^{a(1+k^b+\dots + k^{b(m-1)})}y^{bm}$ 
holds for any integer $a$, nonnegative integer $b$ and positive integer $m$.
\end{lem}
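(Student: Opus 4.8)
The plan is to prove the identity by induction on the exponent $m$, using Lemma \ref{lemma31} as the single nontrivial ingredient. The base case $m=1$ is immediate: the claimed right-hand side reads $x^{a(1)}y^{b}=x^ay^b$, since the geometric sum $1+k^b+\dots+k^{b(m-1)}$ collapses to its single term $1$ when $m=1$.

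For the inductive step I would assume the formula for $m-1$, namely
\[
(x^ay^b)^{m-1}=x^{a(1+k^b+\dots+k^{b(m-2)})}y^{b(m-1)},
\]
and then write $(x^ay^b)^m=(x^ay^b)^{m-1}(x^ay^b)$ and substitute. The key maneuver is to transport the trailing power $y^{b(m-1)}$ past the factor $x^a$ coming from the last copy of $x^ay^b$. Applying Lemma \ref{lemma31} with exponent $b(m-1)$ gives
\[
y^{b(m-1)}x^a=x^{ak^{b(m-1)}}y^{b(m-1)},
\]
so that the two powers of $x$ merge and the two powers of $y$ add. Concretely,
\[
(x^ay^b)^m=x^{a(1+k^b+\dots+k^{b(m-2)})}\,x^{ak^{b(m-1)}}\,y^{b(m-1)}y^{b}
=x^{a(1+k^b+\dots+k^{b(m-2)}+k^{b(m-1)})}y^{bm},
\]
which is exactly the asserted formula for $m$.

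There is no real obstacle here beyond careful exponent bookkeeping: the only structural fact used is the conjugation relation of Lemma \ref{lemma31}, and everything else is the additivity of exponents for $y$ together with the recognition that appending the term $k^{b(m-1)}$ extends the geometric sum from $m-1$ terms to $m$ terms. I would make sure the hypotheses match those of Lemma \ref{lemma31} (the exponent $b(m-1)$ is a nonnegative integer whenever $b\geq 0$ and $m\geq 1$), so that the lemma applies verbatim and the induction goes through for all positive integers $m$.
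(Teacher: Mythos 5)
Your proof is correct and follows essentially the same route as the paper: induction on $m$, splitting off one factor $(x^ay^b)$ and using Lemma \ref{lemma31} to move $y^{b(m-1)}$ past $x^a$, which appends the term $k^{b(m-1)}$ to the geometric sum. The only cosmetic difference is that you phrase the key step as a commutation relation rather than as a conjugation sandwiched between $y^{b(m-1)}$ and $y^{-b(m-1)}$; the content is identical.
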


\begin{proof}
By induction on $m$.
The case $m=1$ is trivial.
Assume that $m>1$.
Then
\begin{align*}
(x^ay^b)^m &= (x^ay^b)^{m-1} x^a y^b \\
&=x^{a(1+k^b+\dots + k^{b(m-2)})}y^{b(m-1)}x^ay^b \\
&= x^{a(1+k^b+\dots + k^{b(m-2)})}y^{b(m-1)}x^ay^{-b(m-1)}y^{bm} \\
&=x^{a(1+k^b+\dots + k^{b(m-2)})}x^{ak^{b(m-1)}}y^{bm} \\
&=x^{a(1+k^b+\dots + k^{b(m-1)})}y^{bm}.
\end{align*}
Here, for the second equality, we used the inductive hypothesis and for the fourth equality, we used Lemma \ref{lemma31}.
\end{proof}

\begin{lem}\label{x}
Let $\alpha \in \operatorname{Aut}(G)$.
Then $\alpha(x) = x^a$ for some integer $a$ coprime to $n$.
Hence there exists an integer $j$ such that $\alpha(x)=y^j(x)$.
\end{lem}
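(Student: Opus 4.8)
The plan is to pin down $C_n=\langle x\rangle$ inside $G$ by means of the commutator subgroup, and then to combine the fact that $\alpha$ preserves the order of $x$ with the explicit power formula of Lemma \ref{lemma32} to force $\alpha(x)$ to be a generator of $C_n$.

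First I would compute $[G,G]$. From the relation $yxy^{-1}=x^k$ we get $[y,x]=x^{k-1}$, so $\langle x^{k-1}\rangle\subseteq[G,G]$; by Lemma \ref{elnum}(ii) we have $(n,k-1)=2$, hence $\langle x^{k-1}\rangle=\langle x^2\rangle$, which is the subgroup of $C_n$ of order $p^e$. Conversely, since $k$ is odd we have $x^k\equiv x \pmod{\langle x^2\rangle}$, so the images of $x$ and $y$ commute and $G/\langle x^2\rangle$ is abelian; therefore $[G,G]=\langle x^2\rangle$. Because the commutator subgroup is characteristic, $\alpha$ maps $\langle x^2\rangle$ into itself, and in particular $\alpha(x)^2=\alpha(x^2)\in\langle x^2\rangle\subseteq C_n$.

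Next I would write $\alpha(x)=x^a y^b$ and exploit this constraint. As $\alpha$ is an automorphism, $\alpha(x)$ has order $n$. By Lemma \ref{lemma32}, $\alpha(x)^2=x^{a(1+k^b)}y^{2b}$, and membership in $C_n$ forces $y^{2b}=1$, i.e.\ $2b\equiv 0\pmod{\phi(n)}$. Since $\phi(n)=p^{e-1}(p-1)$ is even, this leaves only the possibilities $b\equiv 0$ or $b\equiv \phi(n)/2 \pmod{\phi(n)}$.

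The hard part — really the only delicate point — will be to rule out $b=\phi(n)/2$. Here $y^{\phi(n)/2}$ is the unique involution of the cyclic group $\operatorname{Aut}(C_n)$, namely the inversion $x\mapsto x^{-1}$, so $k^{\phi(n)/2}\equiv -1\pmod n$. Substituting into Lemma \ref{lemma32} gives $\alpha(x)^2=x^{a(1+k^{\phi(n)/2})}y^{\phi(n)}=x^{a(1-1)}=1$, so $\alpha(x)$ would have order at most $2$, contradicting that it has order $n=2p^e>2$. Hence $b\equiv 0$, so $\alpha(x)=x^a\in C_n$, and since $\alpha(x)$ has order $n$ the exponent $a$ is coprime to $n$. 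Finally, the map $x\mapsto x^a$ lies in $\operatorname{Aut}(C_n)=\langle y\rangle$, so $a\equiv k^j\pmod n$ for some $j$ and thus $\alpha(x)=x^{k^j}=y^j(x)$, as required. Everything except the identification $k^{\phi(n)/2}\equiv -1$ and the attendant order bookkeeping is a direct computation with the relations already recorded.
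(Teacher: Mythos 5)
Your proof is correct, but it takes a genuinely different route from the paper's. The paper pins down the exponent $b$ in $\alpha(x)=x^ay^b$ by applying $\alpha$ to the relation $yxy^{-1}=x^k$, which gives $b(k-1)\equiv 0 \pmod{\phi(n)}$; it then computes $(x^ay^b)^{k-1}=x^{a(k^{b(k-1)}-1)/(k^b-1)}$ via Lemma \ref{lemma32}, compares the order of this element with $n/(n,k-1)=n/2$ from Lemma \ref{elnum}(ii), and extracts $k^b\equiv 1\pmod n$, forcing $b=0$. You instead identify the characteristic subgroup $[G,G]=\langle x^2\rangle$ (using Lemma \ref{elnum}(ii) for the containment $\langle x^2\rangle=\langle x^{k-1}\rangle\subseteq[G,G]$, and the oddness of $k$ for the reverse), which immediately forces $y^{2b}=1$ and leaves only $b\equiv 0$ or $b\equiv\phi(n)/2$; the latter dies because $y^{\phi(n)/2}$ is the unique involution of the cyclic group $\operatorname{Aut}(C_n)$, namely inversion, so $\alpha(x)^2=x^{a(1+k^{\phi(n)/2})}=1$ would make $\alpha(x)$ have order at most $2$. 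Your version trades the paper's geometric-series and gcd bookkeeping for a structural reduction (characteristic subgroup plus the unique involution of a cyclic group), which is arguably cleaner and makes it transparent where the hypothesis that $\operatorname{Aut}(C_n)$ is cyclic enters; the paper's computation is more self-contained in that it never needs to identify $k^{\phi(n)/2}\equiv-1\pmod n$. Both arguments rest on the same two inputs, Lemma \ref{lemma32} and Lemma \ref{elnum}(ii), just deployed at different points.
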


\begin{proof}
The last statement is clear from the first one.
Let $\alpha \in \operatorname{Aut}(G)$.
Assume $\alpha(x)=x^ay^b, \alpha(y)=x^c y^d$ for some integers $a,b,c,d$, with $0 \leq b < \phi(n)$.
First, we will show that $b=0$.
We must have $\alpha(y)\alpha(x) \alpha(y^{-1})=\alpha(x^k)$.
It follows that 
\[(x^cy^d)(x^ay^b)(x^c y^d)^{-1}=(x^a y^b)^k, \]
which considered modulo $\lan x \ran$ implies that 
$y^b=y^{bk}$, and thus consequently that $y^{bk-b}=1$.
So $bk \equiv b \mod \phi(n)$.
Assume $0<b<\phi(n)$.
We note that $k^b -1 \neq 0$, since $1<k <n$. 
By Lemma \ref{lemma32}, we see that
\[(x^ay^b)^{k-1}=x^{a(1+k^b+ \dots + k^{b(k-2)})}y^{b(k-1)}=x^{a(k^{b(k-1)}-1)/(k^b -1)}.\]
Since the order of $x^ay^b=\alpha(x)$ is $n$, 
the order of $(x^ay^b)^{k-1}$ is $n/(n,k-1)=n/2$
from Lemma \ref{elnum}, (ii).
On the other hand, the order of 
$x^{a(k^{b(k-1)}-1)/(k^b -1)}$ is $n/\left(n, a\dfrac{k^{b(k-1)}-1}{k^b-1} \right)$.
Hence
\begin{equation}\label{eq1}
\left(n, a\dfrac{k^{b(k-1)}-1}{k^b-1} \right)=2.
\end{equation}
Recall that $b(k-1) \equiv 0 \mod \phi(n)$.
Hence
\begin{equation}\label{eq2}
k^{b(k-1)} \equiv 1 \mod n.
\end{equation}
(\ref{eq1}) and (\ref{eq2}) imply that 
$k^b -1 \equiv 0 \mod p^e$.
Since $k$ is odd, 
we also have $k^b-1 \equiv 0 \mod 2$.
Therefore we can conclude that $k^b-1
\equiv 0 \mod n$.
This contradicts the assumptions that $k \in (\bbz/n\bbz)^*$ has order $\phi(n)$ and that $0<b<\phi(n)$.
Hence $b=0$ and $\alpha(x)=x^a$.
Since $\alpha(x)=x^a$ has order $n$,
$a$ is coprime to $n$ and this completes the proof.
\end{proof}

\begin{lem}\label{y}
   Let $\alpha \in \operatorname{Aut}(G)$.
   Then $\alpha(y) = x^cy$ for some integer $c$.
\end{lem}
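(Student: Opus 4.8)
The goal is to pin down $\alpha(y)$ for an arbitrary $\alpha \in \aut(G)$, knowing already from Lemma \ref{x} that $\alpha(x) = x^a$ with $(a,n)=1$. The plan is to write $\alpha(y) = x^c y^d$ for integers $c,d$ with $0 \le d < \phi(n)$, and then to force $d \equiv 1 \bmod \phi(n)$ using the defining relation $yxy^{-1} = x^k$. Since $\alpha$ is an automorphism, it must respect this relation, so applying $\alpha$ gives $\alpha(y)\alpha(x)\alpha(y)^{-1} = \alpha(x)^k$, that is,
\[
(x^c y^d)\, x^a\, (x^c y^d)^{-1} = x^{ak}.
\]
The left-hand side simplifies via Lemma \ref{lemma31}: conjugation by $y^d$ sends $x^a$ to $x^{ak^d}$, and the $x^c$ factors do not affect the exponent since $C_n$ is abelian. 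So the relation collapses to $x^{ak^d} = x^{ak}$, i.e.\ $ak^d \equiv ak \bmod n$.

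\textbf{Extracting the constraint on $d$.}
Because $(a,n)=1$, the element $a$ is invertible modulo $n$, so the congruence $ak^d \equiv ak \bmod n$ cancels to $k^d \equiv k \bmod n$, and since $k$ is a unit modulo $n$ this gives $k^{d-1} \equiv 1 \bmod n$. The key input now is that the order of $k$ in $(\bbz/n\bbz)^*$ is exactly $\phi(n)$ (this is part of the standing hypotheses of Section 3). Hence $\phi(n) \mid d-1$, and combined with the normalization $0 \le d < \phi(n)$ this forces $d = 1$. Therefore $\alpha(y) = x^c y$ for some integer $c$, which is precisely the claim.

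\textbf{Where the work really lies.}
The main obstacle is conceptual rather than computational: one must recognize that the only leverage $\alpha$ gives on the exponent $d$ comes through the single relation $yxy^{-1}=x^k$, and that reducing the conjugation identity cleanly requires Lemma \ref{lemma31} to handle $y^d x^a y^{-d}$. Everything else is routine once the reduction modulo $\lan x \ran$ is performed. I would be slightly careful to note that the congruence $k^{d-1}\equiv 1\bmod n$ genuinely uses the full order $\phi(n)$ of $k$ (rather than just its order modulo $p^e$ or modulo $2$ separately), since it is this maximality that pins $d$ down uniquely in the allowed range. No analogue of the delicate divisibility argument from Lemma \ref{x} is needed here, because there we had to rule out a nonzero exponent on $y$ in $\alpha(x)$, whereas here we are merely determining a forced value of $d$; the structure of the problem makes this lemma strictly easier than its predecessor.
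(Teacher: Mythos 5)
Your proof is correct and follows essentially the same route as the paper: write $\alpha(y)=x^cy^d$, apply $\alpha$ to the relation $yxy^{-1}=x^k$ to obtain $ak^d\equiv ak \bmod n$, cancel the units $a$ and $k$, and use that $k$ has order $\phi(n)$ in $(\bbz/n\bbz)^*$ to force $y^d=y$. The only cosmetic difference is that you invoke Lemma \ref{lemma31} explicitly for the conjugation step, which the paper leaves implicit.
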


\begin{proof}
Let $\alpha \in \operatorname{Aut}(G)$.
By Lemma \ref{x}, we can assume that $\alpha(x)=x^a, \alpha(y)=x^c y^d$ for some integers $a,c,d$ with $a$ coprime to $n$ and $0 \leq d < \phi(n)$.
As above we must have
\[(x^cy^d)(x^a)(x^c y^d)^{-1}=(x^a)^k. \]
This implies $x^{ak^d}=x^{ak}$,
so $ak^d \equiv ak \mod n$.
Since $ak$ is coprime to $n$,
we have $k^{d-1} \equiv 1 \mod n$.
Hence $d-1 \equiv 0 \mod \phi(n)$.
Therefore $y^d=y$, and we have $\alpha(y)=x^c y^d = x^c y$.
\end{proof}

We define a map $\psi : \operatorname{Aut}(G) \to G$ as follows.
Let $\alpha \in \operatorname{Aut}(G)$.
By Lemmas \ref{x} and \ref{y}, 
there exist integers $j$ and $c$ such that $\alpha(x)=y^j(x), \alpha(y)=x^c y$.
We note that $y^j$ and $x^c$ are uniquely determined from $\alpha$.
We define
\[\psi(\alpha)=x^c y^j .\]

\begin{prop}\label{gphom}
The map $\psi : \operatorname{Aut}(G) \to G$ is a group homomorphism.
\end{prop}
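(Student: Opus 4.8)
The plan is to check directly that $\psi$ turns composition in $\operatorname{Aut}(G)$ into multiplication in $G$. Fix $\alpha,\beta\in\operatorname{Aut}(G)$. By Lemmas \ref{x} and \ref{y} I may write $\alpha(x)=y^{j_1}(x)=x^{k^{j_1}}$, $\alpha(y)=x^{c_1}y$ and $\beta(x)=y^{j_2}(x)=x^{k^{j_2}}$, $\beta(y)=x^{c_2}y$, so that by definition $\psi(\alpha)=x^{c_1}y^{j_1}$ and $\psi(\beta)=x^{c_2}y^{j_2}$. The goal is then to show $\psi(\alpha\beta)=\psi(\alpha)\psi(\beta)$.

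First I would put $\alpha\beta$ into the normal form prescribed by Lemmas \ref{x} and \ref{y} by evaluating it on the generators, using the convention $(\alpha\beta)(g)=\alpha(\beta(g))$. Computing $(\alpha\beta)(x)=\alpha(x^{k^{j_2}})=x^{k^{j_1+j_2}}$ identifies the exponent parameter of $\alpha\beta$ as $j_1+j_2$ (modulo $\phi(n)$). Computing $(\alpha\beta)(y)=\alpha(x^{c_2}y)=\alpha(x)^{c_2}\alpha(y)=x^{c_2k^{j_1}}x^{c_1}y$ identifies its $c$-parameter as $c_1+c_2k^{j_1}$ (modulo $n$). Hence $\psi(\alpha\beta)=x^{c_1+c_2k^{j_1}}y^{j_1+j_2}$.

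Next I would expand $\psi(\alpha)\psi(\beta)=x^{c_1}y^{j_1}x^{c_2}y^{j_2}$ inside $G$. The key ingredient is Lemma \ref{lemma31}, which gives $y^{j_1}x^{c_2}y^{-j_1}=x^{c_2k^{j_1}}$, i.e.\ $y^{j_1}x^{c_2}=x^{c_2k^{j_1}}y^{j_1}$; moving $y^{j_1}$ past $x^{c_2}$ and collecting powers yields $x^{c_1+c_2k^{j_1}}y^{j_1+j_2}$, which agrees with $\psi(\alpha\beta)$. This proves the homomorphism property.

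The computations are short; the only points demanding care are fixing the composition convention and tracking the two separate moduli ($n$ for the $x$-exponents and $\phi(n)$ for the $y$-exponents). The conceptual heart is that the twist by $k^{j_1}$ appearing in the $x$-coordinate of $(\alpha\beta)(y)$ is exactly the twist produced by the semidirect-product relation $yxy^{-1}=x^k$ when multiplying $\psi(\alpha)$ and $\psi(\beta)$ in $G$; this is why $\psi$ is compatible with the group laws on both sides. I do not anticipate a genuine obstacle beyond this bookkeeping.
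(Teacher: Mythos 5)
Your proof is correct and follows essentially the same route as the paper: evaluate $\alpha\beta$ on the generators $x$ and $y$ to read off its parameters, then match this against the product $\psi(\alpha)\psi(\beta)$ computed via the semidirect-product relation $y^{j}x^{c}y^{-j}=x^{ck^{j}}$. The only cosmetic difference is that you write the twist explicitly as $x^{c_2k^{j_1}}$ where the paper keeps the notation $y^{j}(x^{d})$, which is the same quantity.
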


\begin{proof}
Let $\alpha, \beta \in \operatorname{Aut}(G)$.
Assume $\psi(\alpha)=x^c y^j$, $\psi(\beta)=x^{d}y^{m}$.
Then we have
$(\alpha\beta)(x)=\alpha(\beta(x))=\alpha(y^m(x))
=y^j(y^m(x))
=y^{j+m}(x)
$
and
$(\alpha\beta)(y)=\alpha(\beta(y))
=\alpha(x^{d}y)
=\alpha(x)^{d}\alpha(y)
=y^j(x^d)x^cy
$.
Hence
$\psi(\alpha \beta)=y^j(x^d)x^cy^{j+m}$.
On the other hand, we have
$\psi(\alpha) \psi(\beta)=x^cy^jx^{d}y^{m}
=x^cy^j(x^d)y^{j+m}
=y^j(x^d)x^c y^{j+m}
$.
Therefore $\psi(\alpha) \psi(\beta)
=\psi(\alpha \beta)$.
\end{proof}

\begin{prop}\label{inj}
The map $\psi : \operatorname{Aut}(G) \to G$ is injective.
\end{prop}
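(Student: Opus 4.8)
Since $\psi$ is already known to be a group homomorphism by Proposition \ref{gphom}, the plan is to show injectivity by proving that its kernel is trivial. So I would start from an arbitrary $\alpha \in \operatorname{Aut}(G)$ with $\psi(\alpha)=1$ and aim to deduce $\alpha = \id$. Writing $\alpha(x)=y^j(x)$ and $\alpha(y)=x^c y$ as in the definition of $\psi$, the hypothesis $\psi(\alpha)=1$ becomes the single relation $x^c y^j = 1$ in $G$.

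The crucial ingredient is the uniqueness of the normal form in $G$: every element can be written uniquely as $x^a y^b$ with $a$ taken modulo $n$ and $b$ taken modulo $\phi(n)$. Applying this to $x^c y^j = 1 = x^0 y^0$, I would read off the two separate congruences $c \equiv 0 \mod n$ and $j \equiv 0 \mod \phi(n)$. In other words, $x^c = 1$ in $C_n$ and $y^j = 1$ in $\operatorname{Aut}(C_n)$.

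It then follows immediately that $\alpha(x) = y^j(x) = x$, since $y^j$ is the identity automorphism of $C_n$, and that $\alpha(y) = x^c y = y$. Because $G$ is generated by $x$ and $y$, an automorphism that fixes both generators must be the identity, so $\alpha = \id$. This shows $\Ker \psi = 1$, hence $\psi$ is injective.

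I do not expect a genuine obstacle in this argument: the entire content is the passage from the one group relation $x^c y^j = 1$ to the two independent congruences, which is exactly the uniqueness of the normal form recorded at the start of Section 3. The only point requiring a little care is bookkeeping: the exponent $j$ is recovered only modulo $\phi(n)$ (the order of the cyclic group $\operatorname{Aut}(C_n)$), while $c$ is recovered modulo $n$, and one must match each congruence to the correct modulus so that $y^j$ and $x^c$ are genuinely trivial.
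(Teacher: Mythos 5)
Your proposal is correct and matches the paper's own proof essentially verbatim: both reduce to showing the kernel is trivial, use the uniqueness of the normal form $x^a y^b$ to split $x^c y^j=1$ into $x^c=1$ and $y^j=1$, and conclude that $\alpha$ fixes the generators $x$ and $y$.
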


\begin{proof}
Let $\alpha \in \operatorname{Aut}(G)$.
Assume $\psi(\alpha)=x^c y^j=1$.
Then $x^c=1$ and $y^j=1$, so
we have $\alpha(x)=y^j(x)=x$ and 
$\alpha(y)=x^cy=y$.
Since $G$ is generated by $x$ and $y$, we see that $\alpha$ is the identity map on $G$.
\end{proof}

\begin{prop}\label{surj}
The map $\psi : \operatorname{Aut}(G) \to G$ is surjective.
\end{prop}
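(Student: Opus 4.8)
The plan is to show that an arbitrary element $x^{c}y^{j}$ of $G$ lies in the image of $\psi$. By the definition of $\psi$, it suffices to produce an automorphism $\alpha \in \operatorname{Aut}(G)$ with
\[
\alpha(x) = y^{j}(x) = x^{k^{j}}, \qquad \alpha(y) = x^{c}y.
\]
Rather than check by hand that this assignment on generators respects every defining relation of $G$, I would appeal to Theorem \ref{Is335}, which was recorded earlier for exactly this purpose. Take $G_{0}=G$ and $N=N_{0}=\langle x\rangle \cong C_{n}$, so that $G/N \cong \operatorname{Aut}(C_{n})$ is cyclic of order $m=\phi(n)$. Let $\eta\colon N\to N$ be the map $x^{a}\mapsto x^{ak^{j}}$; since $\gcd(k,n)=1$ we have $\gcd(k^{j},n)=1$, so $\eta$ is an automorphism of $C_{n}$. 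Choose the generating cosets $g=y$ and $g_{0}=x^{c}y$ of $G/N$, noting that $g_{0}N = yN$ does generate $G/N$.

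It then remains to verify the two hypotheses of Theorem \ref{Is335}. For the power condition, $g^{m}=y^{\phi(n)}=1$, so $\eta(g^{m})=1$, while by Lemma \ref{lemma32},
\[
g_{0}^{m}=(x^{c}y)^{\phi(n)} = x^{c(1+k+\dots+k^{\phi(n)-1})}\,y^{\phi(n)} = x^{c(1+k+\dots+k^{\phi(n)-1})},
\]
which equals $1$ because $1+k+\dots+k^{\phi(n)-1}\equiv 0 \pmod{n}$ by Lemma \ref{elnum}(iii). For the conjugation condition, using $yxy^{-1}=x^{k}$ and the fact that $C_{n}$ is abelian, one computes for each $x^{a}\in N$ that
\[
\eta(g\, x^{a}\, g^{-1}) = \eta(x^{ak}) = x^{ak^{j+1}}, \qquad g_{0}\,\eta(x^{a})\,g_{0}^{-1} = (x^{c}y)\,x^{ak^{j}}\,(x^{c}y)^{-1} = x^{ak^{j+1}},
\]
so the two sides agree. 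Thus both hypotheses hold.

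Theorem \ref{Is335} then yields a (unique) isomorphism $\alpha\colon G\to G$ extending $\eta$ with $\alpha(g)=g_{0}$; that is, an automorphism of $G$ with $\alpha(x)=x^{k^{j}}=y^{j}(x)$ and $\alpha(y)=x^{c}y$, whence $\psi(\alpha)=x^{c}y^{j}$. Since $x^{c}y^{j}$ was an arbitrary element of $G$, this proves $\psi$ surjective.

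The one genuinely nonformal ingredient is the power condition $g_{0}^{m}=1$: every other step is bookkeeping with the relations of $G$, but the vanishing of $(x^{c}y)^{\phi(n)}$ rests entirely on the congruence $1+k+\dots+k^{\phi(n)-1}\equiv 0 \pmod{n}$, which is the arithmetic heart of the hypothesis $n=2p^{e}$ (Lemma \ref{elnum}(iii)). I therefore expect the real obstacle to sit in that number-theoretic lemma rather than in the surjectivity argument itself, which is essentially a verification that the data $(\eta,g,g_{0})$ satisfy Isaacs' criterion. As a remark, one could avoid Theorem \ref{Is335} altogether by defining $\alpha$ directly on the generators and checking that the three relations $x^{n}=1$, $y^{\phi(n)}=1$, and $yxy^{-1}=x^{k}$ are preserved, then noting that $\alpha$ is onto (hence bijective, $G$ being finite) because $\alpha(x)$ generates $\langle x\rangle$ and $\alpha(y)$ projects to a generator of $G/\langle x\rangle$; but the two required computations are identical to the hypotheses checked above.
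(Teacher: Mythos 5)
Your proposal is correct and follows essentially the same route as the paper: both verify the power condition $(x^cy)^{\phi(n)}=1$ via Lemmas \ref{lemma32} and \ref{elnum}(iii) and the conjugation compatibility, then invoke Theorem \ref{Is335} to produce the required automorphism $\alpha$ with $\psi(\alpha)=x^cy^j$. Your write-up is merely a bit more explicit about the data $(N, N_0, \eta, g, g_0)$ fed into Isaacs' criterion.
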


\begin{proof}
Let $z \in G$.
Then $z=x^c y^j$ for some $0 \leq c < n$ and $0 \leq j < \phi(n)$.
We note that $yC_n=(x^cy)C_n$ is a generating coset of $G/C_n$.
We have
$(x^cy)^{\phi(n)}=x^{c(1+k+\dots +k^{\phi(n)-1})}
y^{\phi(n)}
=1$ by Lemmas \ref{lemma32} and \ref{elnum}, (iii).
Furthermore, 
$y^j(yxy^{-1})=y^j(y(x))=y^{j+1}(x)=x^c y(y^{j}(x))x^{-c}=(x^cy)y^j(x)(x^cy)^{-1}$.
Taking $i$-th power of both sides, we have
$y^j(yx^iy^{-1})=(x^cy)y^j(x^i)(x^cy)^{-1}$ for all integers $i$.
By Theorem \ref{Is335}, there exists a unique isomorphism $\alpha :G \to G$ that extends the isomorphism $y^j : \lan x\ran \to \lan x \ran$ so that $\alpha(y)=x^cy$.
Then $\psi(\alpha)=x^cy^j = z$ by definiton of $\psi$.
\end{proof}

\begin{thm}\label{main}
The map $\psi : \operatorname{Aut}(G) \to G$ is an isomorphism.
\end{thm}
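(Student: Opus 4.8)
The plan is to observe that Theorem \ref{main} is now an immediate consequence of the three preceding propositions, which together establish that $\psi$ is a bijective group homomorphism. By Proposition \ref{gphom} the map $\psi : \operatorname{Aut}(G) \to G$ respects the group operation, by Proposition \ref{inj} it is injective, and by Proposition \ref{surj} it is surjective. A homomorphism that is both injective and surjective is a bijective homomorphism, and a bijective group homomorphism is an isomorphism; hence $\psi$ is an isomorphism and $G \cong \operatorname{Aut}(G)$.

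The real content of the argument has already been discharged in the earlier results, so there is no remaining obstacle at this stage. First I would record that the three properties have been verified, then cite each proposition, and conclude. The only point worth a sentence is the well-definedness of $\psi$, which was settled before Proposition \ref{gphom}: Lemmas \ref{x} and \ref{y} guarantee that every $\alpha \in \operatorname{Aut}(G)$ satisfies $\alpha(x) = y^j(x)$ and $\alpha(y) = x^c y$ for uniquely determined $y^j$ and $x^c$, so the assignment $\alpha \mapsto x^c y^j$ is a genuine function $\operatorname{Aut}(G) \to G$.

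If I were laying out where the difficulty truly lies, I would point back to Lemma \ref{x} as the crux: pinning down that $b = 0$, i.e.\ that $\alpha(x)$ lies in $\langle x \rangle$ rather than involving a nontrivial power of $y$, is exactly where the hypothesis $n = 2p^e$ (through Lemma \ref{elnum}) is used in an essential way. Once that rigidity of automorphisms is known, both the homomorphism property and bijectivity of $\psi$ follow by the direct computations already carried out. Thus the proof of Theorem \ref{main} itself is purely a matter of assembling Propositions \ref{gphom}, \ref{inj}, and \ref{surj}.

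\begin{proof}
By Proposition \ref{gphom}, the map $\psi : \operatorname{Aut}(G) \to G$ is a group homomorphism. It is injective by Proposition \ref{inj} and surjective by Proposition \ref{surj}. Therefore $\psi$ is a bijective group homomorphism, hence an isomorphism. In particular $G \cong \operatorname{Aut}(G)$.
\end{proof}
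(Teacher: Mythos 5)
Your proof is correct and takes exactly the same approach as the paper: Theorem \ref{main} follows immediately by combining Propositions \ref{gphom}, \ref{inj}, and \ref{surj}, which is precisely how the paper concludes.
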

\begin{proof}
This follows from Propositions \ref{gphom}, \ref{inj} and \ref{surj}.
\end{proof}

\begin{bibdiv}
\begin{biblist}

   \bib{Isaacs}{book}{
   author={Isaacs, I. Martin},
   title={Finite group theory},
   series={Graduate Studies in Mathematics},
   volume={92},
   publisher={American Mathematical Society, Providence, RI},
   date={2008},
   pages={xii+350},
}

\bib{miller}{article}{
   author={Miller, G. A.},
   title={On the holomorph of a cyclic group},
   journal={Trans. Amer. Math. Soc.},
   volume={4},
   date={1903},
   number={2},
   pages={153--160},
}

\bib{svb}{article}{
   author={Sajikumar, Sadanandan},
   author={Vinod, Sivadasan},
   author={Biju, Gopinadhan Sathikumari},
   title={Automorphisms of automorphism group of dihedral groups},
   journal={Creat. Math. Inform.},
   volume={32},
   date={2023},
   number={2},
   pages={229--235},
}

   \bib{walls}{article}{
   author={Walls, Gary L.},
   title={Automorphism groups},
   journal={Amer. Math. Monthly},
   volume={93},
   date={1986},
   number={6},
   pages={459--462},
}

\end{biblist}
\end{bibdiv}

\end{document}